\definecolor{pink}{rgb}{1,0.03,0.8}
\newtheorem{thm}{Theorem}[section]
\newtheorem{cor}[thm]{Corollary}
\newtheorem{lem}[thm]{Lemma}
\theoremstyle{definition}
\theoremstyle{remark}
\numberwithin{equation}{section}
\newcommand{\V}{\mathscr{V}}
\newcommand{\p}{\mathscr{P}}
\title [Stability of the Pexiderized Cauchy Functional Equation]{Hyers-Ulam Type Stability of the Pexiderized Cauchy Functional Equation in Locally Convex Cones}
\author [J.  Mohammadpour,  A. Najati and Iz. EL-Fassi]{Jafar Mohammadpour,  Abbas Najati and Iz-iddine EL-Fassi}
\date{}
\address{Jafar Mohammadpour, Abbas Najati
\newline
 \indent
 Department of Mathematics, Faculty of Mathematical Sciences
\newline
\indent  University of  Mohaghegh Ardabili,  Ardabil, Iran}
\email{jm8427836@gmail.com; a.nejati@yahoo.com}
\date{}
\address{Iz-iddine EL-Fassi \newline
 \indent
Department of Mathematics, Faculty of Sciences and Techniques,
\newline
\indent S. M. Ben Abdellah University, Fez, Morocco}
\email{ izidd-math@hotmail.fr,~ izelfassi.math@gmail.com,
\newline
\indent\hspace{2.5cm} iziddine.elfassi@usmba.ac.ma}
\begin{document}

\maketitle

\begin{abstract}
The foundation of locally convex cone theory relies on order-theoretic concepts that induce specific topological frameworks. Within this structure, cones naturally possess three distinct topologies: lower, upper, and symmetric. In this paper, we consider the Hyers-Ulam type stability of the Pexiderized Cauchy functional equation $f(x+y)=g(x)+h(y)$ in locally convex cones. Additionally, we present several significant corollaries that follow from our primary findings.\\
\newline
\noindent
{\bf Mathematics Subject Classification.} 39B82, 39B52, 46A03.
\newline
\noindent {\bf Keywords.} Additive mapping, Pexiderized Cauchy functional equation,  Hyers-Ulam stability, Locally convex cone.
\end{abstract}

\section{Introduction and preliminaries}

The concept of locally convex cones was first introduced and further developed in \cite{KR} and \cite{Ro}. A cone, denoted by
 $\p$, is a set equipped with two operations: addition $(a,b)\mapsto a+b$ and scalar multiplication
 $(\lambda,a)\mapsto \lambda a$, where   $\lambda$ is a non-negative real number. The addition operation is assumed to be both associative and commutative, and there exists a neutral element $0\in\p$.
For scalar multiplication, the standard associative and distributive properties are satisfied. Specifically, for all $a,b\in\p$ and $\lambda,\mu\geqslant0$ the following hold:
\[
\lambda(\mu a) = (\lambda\mu)a, \quad (\lambda + \mu)a = \lambda a + \mu a, \quad \lambda(a + b) = \lambda a + \lambda b, \quad 1a = a, \quad \text{and} \quad 0a = 0
\]
It is important to note that the cancellation law, which states that
 $a+c = b+c$ implies $a = b$, is not generally required in this framework.
 \par
A \textit{preordered cone} is a cone \(\p \) equipped with a reflexive and transitive relation $\leqslant$ which is compatible with both the addition and scalar multiplication operations. Specifically, for all $a,b,c \in \p $ and $\lambda\geqslant0$, the following properties hold:
\[
a\leqslant b \implies a+c \leqslant b+c \quad \text{and} \quad \lambda a \leqslant \lambda b.
\]
It is worth noting that the relation $\leqslant$ is not required to satisfy anti-symmetry, meaning that $a\leqslant b$ and $b\leqslant a$ do not necessarily imply
$a=b$.
Every ordered vector space naturally qualifies as a preordered cone. Additionally, the cones $\mathbb{\overline{R}} = \mathbb{R} \cup \{+\infty\}$ and $\mathbb{\overline{R}}_+ = [0,+\infty]$, when equipped with the standard order and algebraic operations (notably, $0 \cdot (+\infty) = 0)$ are examples of preordered cones. Every cone $\p$ can be endowed with a natural preorder, defined by the relation
$a\leqslant b$ if there exists an element $c\in\p$ such that $a+c=b$. This preorder structure is inherent to the cone and does not require additional assumptions.
\par
A subset $\V$ of a preordered cone $\p$ is referred to as an \textit{abstract 0-neighborhood system} if it satisfies the following conditions:
\begin{itemize}
  \item [$\rm(i)$] $0 < v$ \text{ for all } $v \in \V$;
  \item [$\rm(ii)$] For any $u, v \in \V$,  there exists an element  $w \in \V$ such that
  $w \leqslant u$ \text{ and } $w\leqslant v$;
  \item [$\rm(iii)$] $u + v \in \V$ \text{ and } $\lambda v \in \V$ \text{ whenever } $u, v \in \V$ \text{ and } $\lambda > 0$.
\end{itemize}
The elements
$v$ of $\V$ define upper and lower neighborhoods for the elements of $\p$ as follows:
\begin{align*}
  v(a) &= \{b \in \p:~ b \leqslant a + v\}\quad (\text{\rm upper neighborhood}), \\
 (a)v &= \{b \in \p:~   a \leqslant b + v\}\quad (\text{\rm lower neighborhood}).
\end{align*}
The intersection $v(a)v:= v(a)\cap(a)v$ defines the symmetric neighborhood of $a$. These neighborhoods generate the upper, lower, and symmetric topologies on $\p$, respectively.
\par
Let $\p$ be a preordered cone, and let $\V\subseteq\p$ be an abstract 0-neighborhood system. The pair $(\p,\V)$ is referred to as a \textit{full locally convex cone}. Due to the inherent asymmetry of cones, asymmetric conditions naturally arise. For technical reasons, we require that the elements of a full locally convex cone  $(\p,\V)$ are bounded below.  This means that for every $a \in \p $ and $v \in \V$, there exists $\rho > 0$ such that $0 \leqslant a + \rho v$.
Any subcone of $(\p,\V)$, even if it does not include the abstract $0$-neighborhood system $\V$, is termed a \textit{locally convex cone}.
 An element $a \in \p$ is called \textit{upper bounded} if, for every $v \in \V$,  there exists a positive scalar $\lambda > 0$ such that $a \leqslant \lambda v$. The element $a\in\p$ is said to be \textit{bounded} if it is both lower and upper bounded. By defining $\xi = \{\varepsilon > 0 :~\varepsilon \in \Bbb{R}\}$, the pairs $(\Bbb{\overline{R}}, \xi)$ and $(\Bbb{\overline{R}}_+, \xi)$ form examples of full locally convex cones.
 \par
 Let $(\p , \V)$ be a locally convex cone, and let  $a \in \p$. The closure of $a$ is defined as the intersection of all its upper neighborhoods, that is,
\[
\overline{a}: = \cap\{v(a):~ v \in \V\}.
\]
It is straightforward to verify that $\bar{a}$ represents the closure of the singleton set $\{a\}$ under the lower topology \cite[Corollary I.3.5]{KR}.
A locally convex cone  $(\p , \V)$ is said to be \textit{separated}  if, for any $a,b\in\p$, the equality $\overline{a}=\overline{b}$
  implies $a=b$. In other words, distinct elements in $\p$ must have distinct closures. The locally convex cone $(\p , \V)$ is separated if
and only if the symmetric topology on $\p$ is Hausdorff (see \cite[Proposition I.3.9]{KR}).
\par
For a net $(a_i)_{i \in I}$ in $(\p,\V)$, we say that it converges to an element $a \in \p$  with respect to the symmetric relative topology of $\p$ if,
 for every $v\in\V$, there exists an index $i_0 \in I$ such that $a_i\leqslant a+ v$ for all $i \geqslant i_0$. The net $(a_i)_{i \in I}$ is called a (symmetric) \textit{Cauchy net} if for every $v \in \V$, there exists an index $i_0 \in I$ such that $a_i \leqslant a_j+ v$ for all $i,j \geqslant i_0$. Clearly, if a net converges, it must also be a Cauchy net. A locally convex cone $(\mathscr{P}, \mathcal{V})$ is said to be (symmetric)  \textit{complete} if every (symmetric) Cauchy net in $(\p,\V)$ converges to an element in $\p$.
\par
A locally convex cone $(\p, \V)$ is called a \textit{uc-cone} if $\V = \{\lambda w : \lambda > 0\}$ for some $w \in \V$. In this case, the element $w$ is referred to as the \textit{generating element} of $\V$, and all elements of $\V$ are bounded . If $(\p, \V)$ is a uc-cone and $\p$ is simultaneously a real vector space,
then $\p$ becomes a seminormed space under the symmetric topology of $(\p, \V)$. When $\V = \{\lambda w : \lambda > 0\}$, the seminorm $q: \p \to [0, +\infty]$ is defined as:
\begin{align}\label{norm}
q(a) = \inf\{\mu > 0 : \mu^{-1} a \in w(0)w\}, \quad a \in \mathscr{P}.
\end{align}
If the symmetric topology on $\p$ is Hausdorff, then $q$ becomes a norm on $\p$ (see \cite{AR} for further details).
\par
Let us recall that an equation is said to be Ulam stable in a class of mappings provided each mapping from this class fulfilling our equation "approximately" is "near" to its actual solution. It is well known that the problem of the stability of homomorphisms of metric groups (in other words, the Cauchy functional equation) was posed by  Ulam \cite{Ulam1940} in 1940. In the next year Hyers \cite{Hyers1941} gave the
first affirmative answer of the Ulam's problem for Banach spaces.
\par
Hyers's Theorem was generalized by Aoki \cite{Aoki1950} for additive mappings in 1950, and independently, by Rassias \cite{Rassias1978} in 1978 for linear mappings considering the Cauchy difference controlled by the sum of powers of norms. This type of stability is called \textit{Hyers-Ulam-Rassias stability}. On the other hand, Rassias \cite{Rassias1982} considered the Cauchy difference controlled by the product of different powers of norms.  G\v{a}vruta \cite{Gavruta1994} has generalized the result of  Rassias \cite{Rassias1978}, which permitted the Cauchy difference to become arbitrary unbounded. The stability problems of several functional equations have been extensively investigated in various spaces by a number of authors, and a large list of references can be found, for example, in \cite{BK, El-Fassi2023, FZ, forti, Jung1998,NR1,NR2, Nik}.
\par
In the recent decades, the stability of functional equations have been investigated by many mathematicians. They have so many applications in Information Theory, Physics, Economic Theory, Pure Mathematics, and Social and
Behavior Sciences. These applications are sources of inspiration, attraction, and
influences to bring more and more people to this interesting and ever-growing
branch of mathematics (see, e.g. \cite{ander,dera}).
\par
This research aims to study the stability in the Ulam sense  of the Pexiderized Cauchy functional equation $f(x+y)=g(x)+h(y)$ in locally convex cones.  As  consequences,  we give some important  corollaries from our main result.
\section{Main Results}
We begin this section with the following  lemma.
\begin{lem}\label{Lem.ab}
Let $ (\p , \V) $ be a separated locally convex cone, and let $a,b\in\p$. If $a\leqslant b+v$ and $b\leqslant a+v$ hold for every $v\in\V$, then $a=b$.
\end{lem}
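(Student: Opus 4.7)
The plan is to reduce the equality $a=b$ to showing that $a$ and $b$ have the same closure, and then invoke the assumption that $(\p,\V)$ is separated. Recall that $\overline{a}=\bigcap_{v\in\V}v(a)$, where $v(a)=\{c\in\p : c\leqslant a+v\}$, and that in a separated cone, $\overline{a}=\overline{b}$ forces $a=b$.

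First I would show $\overline{a}\subseteq\overline{b}$. Pick $c\in\overline{a}$ and any $v\in\V$; I need $c\leqslant b+v$. By axiom (iii) of an abstract $0$-neighborhood system, $\tfrac{1}{2}v\in\V$, so from $c\in\overline{a}$ we get $c\leqslant a+\tfrac{1}{2}v$, and from the hypothesis we have $a\leqslant b+\tfrac{1}{2}v$. Using compatibility of $\leqslant$ with addition and transitivity, one concludes $c\leqslant b+\tfrac{1}{2}v+\tfrac{1}{2}v=b+v$, hence $c\in v(b)$. Since $v\in\V$ was arbitrary, $c\in\overline{b}$. The reverse inclusion $\overline{b}\subseteq\overline{a}$ is obtained symmetrically, swapping the roles of $a$ and $b$ and using $b\leqslant a+v$ from the hypothesis.

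Having established $\overline{a}=\overline{b}$, the separation assumption closes the argument by yielding $a=b$.

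The only subtlety in this plan is whether we are entitled to halve a neighborhood, i.e., to use $\tfrac12 v\in\V$; this is guaranteed by condition (iii) in the definition of the abstract $0$-neighborhood system, so no real obstacle arises. An equally valid alternative would be to invoke axiom (ii) directly: for a given $v\in\V$, choose $w\in\V$ with $w+w\leqslant v$ (after first using (iii) to halve, then (ii) if needed), and run exactly the same two-step estimate. Either route is elementary, and the main content of the lemma is really just packaging the triangle-type inequality for the asymmetric preorder together with the definition of ``separated.''
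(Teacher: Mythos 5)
Your proposal is correct and follows essentially the same route as the paper's proof: reduce to $\overline{a}=\overline{b}$ by halving a neighborhood $v$ via axiom (iii), chaining $c\leqslant a+\tfrac12 v$ with $a\leqslant b+\tfrac12 v$, and then invoking separatedness. No gaps.
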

\begin{proof}
  To prove $a=b$, it suffices to show that $\overline{a}=\overline{b}$. Let $x\in\overline{a}$. Then $x\leqslant a+\frac{1}{2}v$ for all $v\in\V$. Since $a\leqslant b+\frac{1}{2}v$, it follows that $x\leqslant b+v$ for all $v\in\V$,  which implies $x\in\overline{b}$. This demonstrates that $\overline{a}\subseteq\overline{b}$. By a similar argument, we can show that $\overline{a}\subseteq\overline{b}$, leading to $\overline{a}=\overline{b}$. Because $ (\p , \V) $ is a separated locally convex cone, we conclude that $a=b$.
\end{proof}
\begin{cor}
Let $ (\p , \V) $ be a separated locally convex cone, and let $a,b\in\p$. If $a+v= b+v$  for every $v\in\V$, then $a=b$.
\end{cor}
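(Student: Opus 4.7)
The plan is to reduce the corollary to Lemma \ref{Lem.ab}: it suffices to establish that $a \leqslant b + v$ and $b \leqslant a + v$ hold for every $v \in \V$. The subtlety is that the cancellation law is not assumed in $\p$, so the hypothesis $a + v = b + v$ does not let us simply strip off the $v$. The key device will be to exploit the closure of $\V$ under positive scalar multiplication and apply the hypothesis to $\tfrac{1}{2}v$ instead of $v$.

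Fix $v \in \V$. By condition (iii) in the definition of an abstract $0$-neighborhood system, $\tfrac{1}{2}v \in \V$, so the hypothesis applied at $\tfrac{1}{2}v$ gives $a + \tfrac{1}{2}v = b + \tfrac{1}{2}v$, and in particular $a + \tfrac{1}{2}v \leqslant b + \tfrac{1}{2}v$. Since the natural preorder on $\p$ gives $0 \leqslant v$ (and hence $0 \leqslant \tfrac{1}{2}v$) and $\leqslant$ is compatible with addition, we have $a \leqslant a + \tfrac{1}{2}v$ and $b + \tfrac{1}{2}v \leqslant b + v$. Chaining these three inequalities yields $a \leqslant b + v$; interchanging the roles of $a$ and $b$ gives $b \leqslant a + v$ by exactly the same reasoning.

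Because these two inequalities hold for every $v \in \V$ and $(\p, \V)$ is separated, Lemma \ref{Lem.ab} applies and forces $a = b$, which completes the proof.

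The main obstacle is precisely the absence of cancellation in $\p$: without the halving trick, one cannot pass from the two-sided equality $a + v = b + v$ to the one-sided relation $a \leqslant b + v$ needed to feed into Lemma \ref{Lem.ab}. Once one notices that $\V$ is stable under multiplication by $\tfrac{1}{2}$ and that the ``missing'' $\tfrac{1}{2}v$ can simply be reabsorbed into $b + v$ on the right and discarded on the left using $0 \leqslant \tfrac{1}{2}v$, the rest of the argument is formal.
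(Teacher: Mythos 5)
Your proof is correct and takes the route the paper clearly intends: the corollary sits immediately after Lemma \ref{Lem.ab} precisely so that one derives $a\leqslant b+v$ and $b\leqslant a+v$ for every $v\in\V$ and then invokes the lemma, which is exactly what you do. One minor remark: the halving device is an unnecessary detour, since axiom (i) of the abstract $0$-neighborhood system already gives $0\leqslant v$, whence $a=a+0\leqslant a+v=b+v$ directly (and symmetrically $b\leqslant a+v$); still, your version is perfectly valid.
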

The following lemma has an essential role in our main results.
\begin{lem}{\rm(\cite[Lemma 1]{NR1})}\label{Lem.con}
Let $ (\p , \V) $ be a locally convex cone, and let $ a \in \p  $. Suppose $ \{\lambda_n\} $ is a sequence of non-negative scalars converging to $0$ as $ n \to \infty $. Then $a$ is bounded if and only if $ \lambda_n a $ converges to $0$ as $ n \to \infty $ in the symmetric topology.
\end{lem}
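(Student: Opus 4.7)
The plan is to establish both implications by first proving a monotonicity observation for $\V$: for any $v \in \V$ and any scalar $\mu$ with $0 < \mu \leqslant 1$, one has $\mu v \leqslant v$. This follows from the distributivity identity $v = \mu v + (1-\mu)v$ combined with the inequality $0 \leqslant (1-\mu)v$, which itself is a consequence of $0 < v$ and the compatibility of $\leqslant$ with scalar multiplication by the non-negative scalar $1-\mu$. This little fact is the hinge on which both directions turn.

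For the forward implication, assume $a$ is bounded and fix an arbitrary $v \in \V$. Upper and lower boundedness furnish scalars $\lambda, \rho > 0$ with $a \leqslant \lambda v$ and $0 \leqslant a + \rho v$. Since $\lambda_n \to 0$, there is an index $n_0$ such that $\lambda_n \max(\lambda, \rho) \leqslant 1$ for all $n \geqslant n_0$. Multiplying the first inequality by $\lambda_n$ and applying the monotonicity step yields $\lambda_n a \leqslant \lambda_n \lambda v \leqslant v$. Multiplying the second inequality by $\lambda_n$ gives $0 \leqslant \lambda_n a + \lambda_n \rho v$, and combining $\lambda_n \rho v \leqslant v$ with compatibility of $\leqslant$ under addition yields $0 \leqslant \lambda_n a + v$. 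These two inequalities together express precisely that $\lambda_n a \to 0$ in the symmetric topology.

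For the reverse implication, assume $\lambda_n a \to 0$ symmetrically and, passing if necessary to the subsequence of strictly positive terms, assume $\lambda_n > 0$ for all $n$ large enough (the case of an eventually zero sequence is vacuous as regards information about $a$). Fix any $v \in \V$ and select, by symmetric convergence, an index $n_0$ with $\lambda_{n_0} > 0$ satisfying both $\lambda_{n_0} a \leqslant v$ and $0 \leqslant \lambda_{n_0} a + v$. Multiplying each inequality by $1/\lambda_{n_0}$ produces $a \leqslant (1/\lambda_{n_0}) v$ and $0 \leqslant a + (1/\lambda_{n_0}) v$, which are exactly the witnesses of upper and lower boundedness of $a$ with respect to $v$. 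Since $v$ was arbitrary, $a$ is bounded.

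The main obstacle is the initial observation $\mu v \leqslant v$ for $0 < \mu \leqslant 1$: because cones lack cancellation, one must justify each order-manipulation directly from the axioms, and it is precisely this monotonicity that permits converting the scaling $\lambda_n \to 0$ into a statement about upper/lower neighborhoods of $0$. Once this is in hand, the remainder of the argument reduces to rescaling the defining inequalities and invoking the compatibility of the preorder with the cone operations.
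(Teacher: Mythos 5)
The paper does not actually prove this lemma; it imports it verbatim from \cite{NR1}, so there is no in-paper argument to compare against. Your proof is correct and is the natural one: the observation that $\mu v\leqslant v$ for $0<\mu\leqslant 1$ (via $v=\mu v+(1-\mu)v$ and $0\leqslant(1-\mu)v$) is exactly what turns the defining inequalities $a\leqslant\lambda v$ and $0\leqslant a+\rho v$ into the statement $\lambda_n a\in v(0)v$ for large $n$, and rescaling by $1/\lambda_{n_0}$ gives the converse. One caveat is worth making explicit: as literally stated, with merely non-negative scalars, the ``only if'' direction is false for a sequence that is eventually zero, since then $\lambda_n a=0$ converges to $0$ whether or not $a$ is bounded (take $a=+\infty$ in $\overline{\mathbb{R}}$). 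Your parenthetical restriction to sequences having infinitely many strictly positive terms is therefore not a convenience but a genuine, if mild, repair of the hypothesis, consistent with the intended reading in \cite{NR1}; it would be cleaner to state it as an explicit standing assumption rather than as an aside.
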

In \cite{NR1}, the stability problem in locally convex cones, in the sense of Hyers-Ulam, is investigated, and the stability of linear operators in locally convex cones is discussed. The following theorem presents a slightly modified version of Theorem 1 in \cite{NR1}.
\begin{thm}{\rm(\cite[Theorem 1]{NR1})}
  Let $ (\p _1, \V_1) $ be a locally convex cone, and let $ (\p _2, \V_2) $ be a separated full locally
convex cone.  Assume that $ (\p _2, \V_2) $ is complete  under the symmetric topology and that the preorder in  $\p_2$ is antisymmetric. If a function $f:\p_1\to\p_2$ satisfies
\[f(\lambda x + y)\in v(\lambda f(x) + f(y))v\]
for all $x,y\in\p_1$ and all $\lambda>0$, where $v\in\V$ is a fixed bounded element, then there exists a unique linear operator $\ell:\p_1\to\p_2$
such that $\ell(x)\in v(f(x))v$ for all $x\in\p_1$.
\end{thm}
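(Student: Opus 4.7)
The plan is to apply the Hyers direct method: define $\ell(x)$ as the symmetric-topology limit of $\{2^{-n} f(2^n x)\}$, using completeness of $\p_2$, and then verify linearity, the approximation bound, and uniqueness via the framework of Lemma 2.1 and Lemma 2.2.

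First, setting $\lambda = 1$ and $y = x$ in the hypothesis yields $f(2x) \in v(2f(x))v$, so $2f(x) \leq f(2x) + v$ and $f(2x) \leq 2f(x) + v$. Scaling the first by $\tfrac{1}{2}$ gives $f(x) \leq \tfrac{1}{2} f(2x) + \tfrac{1}{2} v$, and a routine induction produces
\[ f(x) \leq 2^{-n} f(2^n x) + (1 - 2^{-n}) v, \qquad 2^{-n} f(2^n x) \leq f(x) + (1 - 2^{-n}) v \]
for every $n \geq 1$. Applying these with $2^m x$ in place of $x$ and rescaling by $2^{-m}$ gives, for $n > m$, the symmetric Cauchy estimate
\[ 2^{-n} f(2^n x) \leq 2^{-m} f(2^m x) + 2^{-m} v \]
together with its reverse. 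Since $v$ is bounded, Lemma 2.2 ensures $2^{-m} v \to 0$ in the symmetric topology, so $\{2^{-n} f(2^n x)\}$ is a symmetric Cauchy sequence, and completeness of $\p_2$ yields a limit $\ell(x) \in \p_2$.

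To establish linearity I would apply the hypothesis at $(2^n x, 2^n y)$ and divide by $2^n$ to obtain
\[ 2^{-n} f(2^n(\lambda x + y)) \leq \lambda \bigl( 2^{-n} f(2^n x) \bigr) + 2^{-n} f(2^n y) + 2^{-n} v \]
and the reverse inequality. Letting $n \to \infty$ and invoking Lemma 2.2 for the $2^{-n} v$ term yields, for every $w \in \V_2$, both $\ell(\lambda x + y) \leq \lambda \ell(x) + \ell(y) + w$ and its reverse; Lemma 2.1 then gives $\ell(\lambda x + y) = \lambda \ell(x) + \ell(y)$, so $\ell$ is linear. For uniqueness, a second linear $\ell'$ satisfying the same bound yields $n \ell(x) \leq f(nx) + v \leq n \ell'(x) + 2v$, hence $\ell(x) \leq \ell'(x) + (2/n) v$; by symmetry in $\ell, \ell'$ and another application of Lemmas 2.1 and 2.2 as $n \to \infty$, we conclude $\ell = \ell'$.

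The step I expect to require the most care is establishing the precise membership $\ell(x) \in v(f(x))v$ itself. Passing to the symmetric limit in the induction estimates only delivers $\ell(x) \leq f(x) + v + w$ and $f(x) \leq \ell(x) + v + w$ for each $w \in \V_2$, since the order on $\p_2$ is not in general closed under symmetric convergence. Closing this $w$-slack will require exploiting $(1 - 2^{-n}) v \leq v$ together with separatedness of $\p_2$ and its antisymmetric preorder, in the same spirit as the proof of Lemma 2.1.
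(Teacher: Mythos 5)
Your proposal is a correct outline of the standard Hyers direct method, but it is not the route the paper takes for this statement. The paper quotes this theorem from [NR1] without proof; the only argument it supplies is for the weaker ``modified version'' (Corollary 2.7), which is obtained from the main Pexider result (Theorem 2.6) by the substitution $g(x):=\alpha f(x/\alpha)$, $h:=f$, so that the hypothesis $f(\alpha x+y)\in v(\alpha f(x)+f(y))v$ becomes an instance of $f(x+y)\in v(g(x)+h(y))v$. Your direct construction of $\ell(x)=\lim_n 2^{-n}f(2^nx)$ is, in substance, the same engine the paper uses to prove Theorem 2.6 itself (iterated halving, Lemma 2.2 to make $2^{-m}v$ small, completeness for the limit, Lemma 2.1 for the exact identities and for uniqueness), so what you buy is a self-contained proof that does not pass through the Pexiderized statement, at the cost of redoing the induction that the corollary route gets for free.

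Two points deserve attention. First, the step you flag at the end is a genuine gap as written: passing to the symmetric limit only yields $\ell(x)\leqslant f(x)+v+w$ for every $w\in\V_2$, which places $\ell(x)$ in the closure of $v(f(x))v$ but not in $v(f(x))v$ itself, and neither separatedness nor antisymmetry closes this by itself. Note that the paper's own version of the conclusion is only $\ell(x)\in(\gamma v)(f(x))(\gamma v)$ for \emph{some} $\gamma>0$ (as in Corollary 2.7 and in Theorem 2.6, where the limit is controlled by choosing the specific neighborhood $w=4v$ rather than an arbitrary one and absorbing it into the constant); your argument does deliver that weaker, and for Hyers--Ulam purposes sufficient, bound, but not the sharp bound with $v$ itself claimed in the quoted statement. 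Second, your linearity step gives $\ell(\lambda x+y)=\lambda\ell(x)+\ell(y)$, but to conclude positive homogeneity you still need $\ell(0)=0$; this requires knowing that $f(0)$ (hence $\ell(0)$) is bounded, which must be extracted from the hypothesis at $x=y=0$ (or assumed, as the paper does in Theorem 2.6) before Lemma 2.2 can be applied to $2^{-n}\ell(0)$. Spelling out those two points would complete the argument.
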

Furthermore, the Hyers-Ulam type stability of the Jensen operator in locally convex cones was established in \cite{NR2} under specific assumptions.
\begin{thm}{\rm(\cite[Theorem 1]{NR2})}
Consider $ (\p _1, \V_1) $ as a locally convex cone and $ (\p _2, \V_2) $ as a separated, full locally convex cone. Assume that $ (\p _2, \V_2) $ is complete with respect to the symmetric topology and that the preorder defined on  $ \p _2 $ is antisymmetric. If a mapping $ f : \p _1 \to \p _2 $ satisfies the condition
\[
2 f\left(\frac{x + y}{2}\right) \in v(f(x) + f(y))v
\]
for some bounded element $ v \in \V_2 $ and for all $x, y \in \p _1$, and if  $ f(0) $ is bounded, then there exists a unique additive Jensen operator $ g : \p _1 \to \p _2 $ and a positive real number $ \gamma $ such that
\[
g(x) \in (\gamma v)(f(x))(\gamma v)
\]
for all $ x \in \p _1 $.
\end{thm}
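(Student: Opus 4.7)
My plan is to apply the classical Hyers direct method, defining $g(x)$ as the symmetric limit of the sequence $g_n(x) := 2^{-n} f(2^n x)$ in $\p_2$, and adapting each step to cope with the absence of a cancellation law. Setting $y = 0$ and then replacing $x$ by $2x$ in the hypothesis first gives the pair of one-sided inequalities
\[
2f(x) \leqslant f(2x) + f(0) + v, \qquad f(2x) + f(0) \leqslant 2f(x) + v.
\]
A straightforward induction on $n$, in which the $f(0)$ terms accumulate rather than cancel, produces
\[
2^n f(x) \leqslant f(2^n x) + (2^n-1)\bigl(f(0) + v\bigr)
\]
together with the companion inequality $f(2^n x) + (2^n-1) f(0) \leqslant 2^n f(x) + (2^n-1) v$. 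Dividing by $2^n$ yields
\[
g_n(x) + (1 - 2^{-n}) f(0) \leqslant f(x) + (1 - 2^{-n}) v, \qquad f(x) \leqslant g_n(x) + (1 - 2^{-n})\bigl(f(0) + v\bigr).
\]

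To show that $(g_n(x))_n$ is symmetric Cauchy, I would replace $x$ by $2^k x$ in these two inequalities and divide once more by $2^k$, comparing $g_k(x)$ and $g_{n+k}(x)$ but leaving a stray $2^{-k}(1 - 2^{-n}) f(0)$ summand on the left-hand side of one of them. This is where I expect the main obstacle: in a cone this $f(0)$ term cannot simply be cancelled. The boundedness of $f(0)$ assumed in the statement is precisely what unlocks the argument: there exist $\lambda, \rho > 0$ with $f(0) \leqslant \lambda v$ and $0 \leqslant f(0) + \rho v$. Adding the trivial relation $0 \leqslant 2^{-k}(1 - 2^{-n})(f(0) + \rho v)$ to the inconvenient side transfers $f(0)$ across the inequality at the price of $2^{-k}(1 - 2^{-n}) \rho v$; combining with the bound $f(0) \leqslant \lambda v$ on the other side gives, with $c := \max(\lambda + 1, 1 + \rho)$,
\[
g_k(x) \leqslant g_{n+k}(x) + 2^{-k} c\, v \quad \text{and} \quad g_{n+k}(x) \leqslant g_k(x) + 2^{-k} c\, v.
\]
Since $v$ is bounded, Lemma \ref{Lem.con} gives $2^{-k} c\, v \to 0$ in the symmetric topology, so $(g_n(x))_n$ is symmetric Cauchy, and symmetric completeness of $\p_2$ produces a limit $g(x) := \lim_n g_n(x)$.

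To finish, I would verify the properties of $g$ in turn. Substituting $(2^n x, 2^n y)$ in the original inclusion and dividing by $2^n$ gives
\[
g_{n-1}(x+y) \in (2^{-n} v)\bigl(g_n(x) + g_n(y)\bigr)(2^{-n} v);
\]
passing to the symmetric limit, the fact that $2^{-n} v \to 0$ together with separation of $\p_2$ and Lemma \ref{Lem.ab} upgrade this to $g(x+y) = g(x) + g(y)$, and multiplication by $1/2$ then yields the Jensen identity $2 g\bigl(\tfrac{x+y}{2}\bigr) = g(x) + g(y)$. The inclusion $g(x) \in (\gamma v)(f(x))(\gamma v)$ is read off the uniform bounds $g_n(x) \leqslant f(x) + \gamma_0 v$ and $f(x) \leqslant g_n(x) + \gamma_0 v$ already produced in the Cauchy step (with $\gamma_0$ depending on $\lambda, \rho$): passing to the limit and inflating slightly to $\gamma = \gamma_0 + 1$ absorbs the residual neighbourhood. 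Finally, for uniqueness, if $g'$ is another additive Jensen operator with $g'(x) \in (\gamma' v)(f(x))(\gamma' v)$, then using $g(x) = 2^{-n} g(2^n x)$ and $g'(x) = 2^{-n} g'(2^n x)$ yields $g(x) \leqslant g'(x) + 2^{-n}(\gamma + \gamma') v$ and the reverse for every $n$, and Lemma \ref{Lem.ab} forces $g = g'$.
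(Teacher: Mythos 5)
Your argument is correct. Note, though, that the paper does not prove this statement directly: it is quoted from \cite{NR2}, and the paper later recovers a modified version of it as a corollary of its main result (Theorem \ref{Px}) via the substitution $g(x)=h(x):=\tfrac{1}{2}f(2x)$, which turns the Jensen inclusion (after replacing $x,y$ by $2x,2y$ and halving) into the Pexider inclusion $f(x+y)\in v(g(x)+h(y))v$. What you do instead is re-run the direct Hyers iteration on the Jensen equation itself: the same sequence $2^{-n}f(2^{n}x)$, the same use of the two-sided boundedness of $f(0)$ (namely $f(0)\leqslant\lambda v$ and $0\leqslant f(0)+\rho v$) to move the accumulated $f(0)$ terms across the one-sided inequalities, Lemma \ref{Lem.con} for the Cauchy estimate, and Lemma \ref{Lem.ab} for additivity and uniqueness. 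This is precisely the machinery of the paper's proof of Theorem \ref{Px}, so your route is a self-contained specialization of that proof rather than a reduction to its statement; the paper's route gets the conclusion in one line once Theorem \ref{Px} is available, while yours avoids the detour through the Pexider form (and, like the paper's corollary, never needs the antisymmetry hypothesis in the quoted statement). The individual steps all check out in the cone setting: the inductive inequalities, the transfer of the stray $2^{-k}(1-2^{-n})f(0)$ summand, and the limit and uniqueness arguments only use compatibility of the preorder with addition and nonnegative scalar multiplication.
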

We now provide our main result.
\begin{thm}\label{Px}
Consider $ (\p _1, \V_1) $ as a locally convex cone and $ (\p _2, \V_2) $ as a separated, full locally convex cone and complete with respect to the symmetric topology. Assume that  the mappings $ f,g,h : \p _1 \to \p _2 $ satisfy the condition
\begin{equation}\label{Px.1}
f(x+y) \in v(g(x) + h(y))v
\end{equation}
for some bounded element $ v \in \V_2 $ and for all $x, y \in \p _1$, and   $ f(0) $ is bounded. Then there exists a unique additive mapping $A:\p _1 \to \p _2$ and a positive real number $ \delta $ such that
\begin{align*}
 A(x)&\in (\delta v)(f(x))(\delta v), \\
 A(x)&\in (1+\delta)v(g(x)+h(0))(1+\delta)v,\\
 A(x)&\in (1+\delta)v(h(x)+g(0))(1+\delta)v
\end{align*}
for all $ x \in \p _1 $.
\end{thm}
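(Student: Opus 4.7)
The plan is to reduce the Pexiderized condition \eqref{Px.1} to an approximate ordinary Cauchy equation satisfied by $f$ alone, and then run Hyers's direct method adapted to locally convex cones.

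First I would substitute $y=0$, $x=0$, and $x=y=0$ in \eqref{Px.1} and combine the resulting two-sided inequalities with \eqref{Px.1} itself. The outcome is
\[
f(x)+f(y)\in (4v)\bigl(f(x+y)+f(0)\bigr)(4v), \qquad x,y\in\p_1,
\]
expressing that $f$ satisfies a Cauchy equation up to the fixed ``shift'' $f(0)$ and a $4v$ error. Setting $y=x$ and dividing by $2$ then yields a two-sided estimate between $f(x)$ and $f(2x)/2+f(0)/2$.

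Next I would iterate. Writing $a_n(x):=f(2^n x)/2^n$ and applying the preceding estimate at $2^n x$, I obtain that $a_{n+1}(x)$ differs from $a_n(x)$ by a $v$-error of order $2^{-n}$ together with $f(0)/2^{n+1}$-corrections. Since $f(0)$ is bounded there exist $\lambda,\rho>0$ with $f(0)\leqslant\lambda v$ and $0\leqslant f(0)+\rho v$, so these corrections are absorbed into multiples of $2^{-n}v$. A geometric-series argument then produces Cauchy estimates
\[
a_n(x)\leqslant a_m(x)+C\cdot 2^{-m}v, \qquad a_m(x)\leqslant a_n(x)+C'\cdot 2^{-m}v, \qquad m<n.
\]
Because $v$ is bounded, to every $v'\in\V_2$ corresponds $\mu(v')>0$ with $v\leqslant\mu(v')v'$, so these right-hand sides lie below any prescribed $v'$ once $m$ is large. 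Hence $\{a_n(x)\}$ is a symmetric Cauchy sequence, and completeness of $(\p_2,\V_2)$ delivers $A(x):=\lim_n a_n(x)\in\p_2$.

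For additivity, bounds, and uniqueness I would proceed as follows. Applying the Cauchy-type estimate of the first step at $(2^nx,2^ny)$, dividing by $2^n$, and passing to the limit---with $f(0)/2^n\to 0$ and $v/2^n\to 0$ in the symmetric topology by Lemma~\ref{Lem.con}---gives $A(x)+A(y)\leqslant A(x+y)+v'$ and its reverse for every $v'\in\V_2$, so Lemma~\ref{Lem.ab} yields additivity. Taking $m=0$ in the iteration bound and letting $n\to\infty$ furnishes $\delta>0$ with $A(x)\in(\delta v)(f(x))(\delta v)$; combining this with $f(x)\in v(g(x)+h(0))v$ (from $y=0$ in \eqref{Px.1}) and $f(x)\in v(h(x)+g(0))v$ (from $x=0$) immediately produces the two remaining inclusions with constant $1+\delta$. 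Uniqueness is the standard Hyers trick: if $A'$ is a second additive map satisfying a similar inclusion, then $A(x)\leqslant A'(x)+2\delta v$, and replacing $x$ by $nx$ and using the $\mathbb{N}$-homogeneity $A(nx)=nA(x)$ yields $A(x)\leqslant A'(x)+(2\delta/n)v$; boundedness of $v$ combined with Lemma~\ref{Lem.con} makes the right-hand error smaller than any $v'\in\V_2$ for $n$ large, and Lemma~\ref{Lem.ab} forces $A=A'$.

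The principal obstacle is the absence of cancellation in a cone: the shift $f(0)$ cannot be subtracted and appears symmetrically on both sides of every inequality. This forces me to track the $f(0)/2^n$ corrections throughout the iteration and absorb them using \emph{both} the upper and lower bounds of $f(0)$ against $v$. The same technical device---using that $v$ itself is bounded, hence dominated up to a scalar by each $v'\in\V_2$---is what allows estimates phrased in terms of the single element $v$ to feed into Lemma~\ref{Lem.ab} and into the definition of a symmetric Cauchy net, both of which quantify over the full neighborhood system $\V_2$.
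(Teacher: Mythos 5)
Your proposal is correct and follows essentially the same route as the paper: reduce \eqref{Px.1} to the shifted approximate Cauchy relation $f(0)+f(x+y)\in(4v)\bigl(f(x)+f(y)\bigr)(4v)$, absorb $f(0)$ using its boundedness, run the direct method on $f(2^nx)/2^n$ with Lemma~\ref{Lem.con} supplying the decay of $2^{-n}v$, and settle additivity and uniqueness via Lemma~\ref{Lem.ab}. The only cosmetic difference is that you scale by $n$ rather than $2^n$ in the uniqueness step, which changes nothing.
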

\begin{proof}
By substituting $x=0$ and $y=0$ in \eqref{Px.1} separately into \eqref{Px.1}, we obtain
\begin{align}
 f(x)&\leqslant v+g(x)+h(0),\quad g(x)+h(0)\leqslant v+f(x),\quad x\in\p_1\label{p.Px.1}\\
  f(y)&\leqslant v+g(0)+h(y),\quad g(0)+h(y)\leqslant v+f(y),\quad y\in\p_1.\label{p.Px.2}
\end{align}
Therefore, $f(0)\leqslant v+g(0)+h(0)$, and from \eqref{p.Px.1} and \eqref{p.Px.2}, it follows that
\begin{equation}\label{p.Px.3}
\begin{aligned}
  f(0)+f(x+y)&\leqslant v+g(0)+h(0)+f(x+y) \\
   & \leqslant 2v+g(0)+h(0)+g(x) +h(y)\\
   & \leqslant 4v+f(x)+f(y)
\end{aligned}
\end{equation}
for all $x,y\in\p_1$. On the other hand, $g(0)+h(0)\leqslant v+f(0)$, and by applying \eqref{p.Px.1} and \eqref{p.Px.2}, we deduce that
\begin{equation}\label{p.Px.4}
\begin{aligned}
  f(x)+f(y)&\leqslant 2v+g(x)+h(y)+g(0)+h(0) \\
   & \leqslant 3v+f(x+y)+g(0)+h(0)\\
   & \leqslant 4v+f(x+y)+f(0)
\end{aligned}
\end{equation}
for all $x,y\in\p_1$. By defining $w=4v$, the inequalities \eqref{p.Px.3} and \eqref{p.Px.4} lead to
\begin{equation}\label{p.Px.5}
  f(0)+f(x+y)\in w(f(x)+f(y))w,\quad x,y\in\p_1.
\end{equation}
By setting $y=x$  in \eqref{p.Px.5}, we obtain
\begin{equation}\label{p.Px.6}
  f(0)+f(2x)\leqslant w+2f(x),\quad 2f(x)\leqslant w+f(0)+f(2x),\quad x\in\p_1.
\end{equation}
Since $f (0)$is bounded, there exists a positive real number $\lambda>0$ such that
\[f(0)\leqslant \lambda w,\quad 0\leqslant f(0)+\lambda w.\]
Consequently, the inequalities in \eqref{p.Px.6} lead to
\begin{equation}\label{p.Px.6+}
  f(2x)\leqslant (\lambda+1)w+2f(x),\quad 2f(x)\leqslant (\lambda+1)w+f(2x),\quad x\in\p_1.
\end{equation}
We use mathematical induction on $n$ to establish the following inequalities for all $x\in\p_1$ and all $n\in\Bbb{N}$:
\begin{equation}\label{p.Px.7}
  \frac{f(2^nx)}{2^n}  \leqslant f(x)+\left(1-\frac{1}{2^n}\right)(\lambda+1)w
\end{equation}
and
\begin{equation}\label{p.Px.8}
f(x)  \leqslant \frac{f(2^nx)}{2^n}+\left(1-\frac{1}{2^n}\right)(\lambda+1)w.
\end{equation}
 We begin by proving \eqref{p.Px.7}. For the case $n=1$, the inequality \eqref{p.Px.7} follows directly from the first relation in \eqref{p.Px.6}. Now, assume that \eqref{p.Px.7} holds for some  $n$ and all $x\in\p_1$. Then for $n+1$, we have
\begin{align*}
   \frac{f(2^{n+1}x)}{2^{n+1}}&\leqslant \frac{f(2x)}{2}+\left(\frac{1}{2}-\frac{1}{2^{n+1}}\right)(\lambda+1)w\quad\text(\rm by~\eqref{p.Px.7})\\
  &\leqslant \frac{1}{2}(\lambda+1)w+f(x)+\left(\frac{1}{2}-\frac{1}{2^{n+1}}\right)(\lambda+1)w\quad\text(\rm by~\eqref{p.Px.6+})\\
  &=f(x)+\left(1-\frac{1}{2^{n+1}}\right)(\lambda+1)w,\quad x\in\p_1.
\end{align*}
This completes the proof of \eqref{p.Px.7}. We now turn to proving \eqref{p.Px.8}. For the case $n=1$, the inequality \eqref{p.Px.8} is a direct consequence of the second relation in \eqref{p.Px.6}. Next, assume that \eqref{p.Px.8} holds for some  $n$ and all $x\in\p_1$. Then for $n+1$, we derive:
\begin{align*}
   f(x)&\leqslant \frac{1}{2}(\lambda+1)w+\frac{f(2x)}{2}\quad\text(\rm by~\eqref{p.Px.6+})\\
   &\leqslant\frac{1}{2}(\lambda+1)w+\frac{f(2^{n+1}x)}{2^{n+1}}+\left(\frac{1}{2}-\frac{1}{2^{n+1}}\right)(\lambda+1)w \quad\text(\rm by~\eqref{p.Px.8})\\
  &= \frac{f(2^{n+1}x)}{2^{n+1}}+\left(1-\frac{1}{2^{n+1}}\right)(\lambda+1)w,\quad x\in\p_1.
\end{align*}
This completes the proof of \eqref{p.Px.8}. By substituting $x$ by $2^mx$ in \eqref{p.Px.7} and \eqref{p.Px.8}, and then multiplying both sides of the resulting inequalities by $\frac{1}{2^m}$, we obtain
\begin{equation}\label{p.Px.9}
  \frac{f(2^{n+m}x)}{2^{n+m}} \leqslant \frac{f(2^mx)}{2^m}+\frac{1}{2^m}(\lambda+1)w
\end{equation}
and
\begin{equation}\label{p.Px.10}
\frac{f(2^mx)}{2^m} \leqslant \frac{f(2^{n+m}x)}{2^{n+m}}+\frac{1}{2^m}(\lambda+1)w
\end{equation}
for all $x\in\p_1$ and all $m,n\in\Bbb{N}$.
Let $u\in\V$ be an arbitrary element. Since $(\lambda+1)w$ is bounded, Lemma \ref{Lem.con} implies that the sequence $\{\frac{1}{2^m}(\lambda+1)w\}_m$ converges to $0$ as $ m \to \infty $ in the symmetric topology. Consequently, there exists an integer $N>0$ such that $\frac{1}{2^m}(\lambda+1)w\leqslant u$ for all $m>N$. Thus, \eqref{p.Px.9} and \eqref{p.Px.10} lead to
\[
   \frac{f(2^{n+m}x)}{2^{n+m}} \leqslant \frac{f(2^mx)}{2^m}+u,\quad
   \frac{f(2^mx)}{2^m} \leqslant \frac{f(2^{n+m}x)}{2^{n+m}}+u
\]
for all $x\in\p_1$ and all $m,n\in\Bbb{N}$ with $m>N$. Thus, the sequence $\left\{\frac{f(2^nx)}{2^n}\right\}_n$ is Cauchy in $(\p_2,\V_2)$ under the symmetric topology.
Given that $(\p_2,\V_2)$ is complete, this sequence converges with respect to the symmetric topology. Moreover, since $(\p_2,\V_2)$ is separated,  the symmetric topology is Hausdorff, ensuring that the limit of the sequence is unique. Define the mapping $A:\p_1\to \p_2$ as
\[A(x):=\lim_n \frac{f(2^nx)}{2^n},\quad x\in\p_1.\]
From \eqref{p.Px.7} and \eqref{p.Px.8}, we derive the following inequalities:
\begin{equation}\label{p.Px.11}
  \frac{f(2^nx)}{2^n}  \leqslant f(x)+(\lambda+1)w,\quad f(x)  \leqslant \frac{f(2^nx)}{2^n}+(\lambda+1)w
\end{equation}
which hold for all $x\in\p_1$ and all $n\in\Bbb{N}$. As the sequence $\left\{\frac{f(2^nx)}{2^n}\right\}_n$ converges to $A(x)$  in the symmetric topology as $ n \to \infty $, there exists an integer $N>0$ such that
\[\frac{f(2^nx)}{2^n}\leqslant A(x)+w,\quad A(x)\leqslant \frac{f(2^nx)}{2^n}+w,\quad n>N.\]
From \eqref{p.Px.11}, it follows that
\begin{equation}\label{p.Px.fA}
 A(x)  \leqslant f(x)+(\lambda+2)w,\quad f(x)  \leqslant A(x)+(\lambda+2)w,\quad x\in\p_1.
 \end{equation}
This demonstrates that $A(x)\in (\delta v)(f(x))(\delta v)$ for all $x\in\p_1$, where $\delta=4(\lambda+2)$. We now establish the additivity of $A$.
Let $x,y\in\p_1$. By substituting $2^nx$ and $2^ny$ for $x$ and $y$ \eqref{p.Px.3} and \eqref{p.Px.4}, respectively, and then dividing both sides of the resulting inequalities by $2^n$, we obtain
\begin{align}
  \frac{f(0)}{2^n}+\frac{f\left(2^n(x+y)\right)}{2^n}& \leqslant \frac{4v}{2^n}+\frac{f(2^nx)}{2^n}+\frac{f(2^ny)}{2^n},\label{p.Px.12} \\
\frac{f(2^nx)}{2^n}+\frac{f(2^ny)}{2^n}
   & \leqslant \frac{4v}{2^n}+\frac{f\left(2^n(x+y)\right)}{2^n}+\frac{f(0)}{2^n} \label{p.Px.13}
\end{align}
for all $x\in\p_1$ and all $n\in\mathbb N$. Let $u\in\V$ be an arbitrary element. There exists a natural number $n\in\Bbb{N}$ such that
 \[\frac{f(2^nz)}{2^n}\leqslant A(z)+u,\quad A(z)\leqslant \frac{f(2^nz)}{2^n}+u,\quad z\in\{x,y,x+y\}\]
 and
 \[ \frac{f(0)}{2^n}\leqslant u,\quad  0\leqslant \frac{f(0)}{2^n}+u, \quad \frac{4v}{2^n}\leqslant u,\quad  0\leqslant \frac{4v}{2^n}+u.\]
Thus, from \eqref{p.Px.12} and \eqref{p.Px.13}, we derive
 \[A(x+y)\leqslant A(x)+A(y)+5u,\quad A(x)+A(y)\leqslant A(x+y)+5u.\]
 By applying Lemma \ref{Lem.ab}, it follows that $A(x+y)=A(x)+A(y)$. This confirms that $A$ is an additive mapping.
 \par
 Combining \eqref{p.Px.1} and \eqref{p.Px.fA}, we arrive at the following inequalities:
 \begin{align*}
  g(x)+h(0)&\leqslant v+f(x)\leqslant A(x)+(1+\delta)v, \\
   A(x)&\leqslant f(x)+\delta v\leqslant g(x)+h(0)+(1+\delta)v
 \end{align*}
 for all $x\in\p_1$. This establishes that $A(x)\in (1+\delta)v(g(x)+h(0))(1+\delta)v$ for all $x\in\p_1$.
 \par
 Similarly, by combining \eqref{p.Px.2} and \eqref{p.Px.fA}, we obtain:
 \begin{align*}
  g(0)+h(y)&\leqslant v+f(y)\leqslant A(y)+(1+\delta)v, \\
   A(y)&\leqslant f(y)+\delta v\leqslant g(0)+h(y)+(1+\delta)v
 \end{align*}
 for all $y\in\p_1$. This demonstrates that $A(y)\in (1+\delta)v(h(y)+g(0))(1+\delta)v$ for all $y\in\p_1$.
 \par
 To prove the uniqueness of $A$, let $\widetilde{A}:\p_1\to \p_2$ be another mapping  such that $\widetilde{A}(x)\in (\delta v)(f(x))(\delta v)$  for all $x\in\p_1$.  Thus,
 \[\widetilde{A}(x)\leqslant \delta v+f(x),\quad f(x)\leqslant \widetilde{A}(x)+ \delta v,\quad x\in\p_1.\]
Consequently, since $\widetilde{A}$ is additive, we obtain
\begin{equation}\label{p.Px.14}
 \widetilde{A}(x)\leqslant \frac{\delta}{2^n} v+\frac{f(2^nx)}{2^n},\quad \frac{f(2^nx)}{2^n}\leqslant \widetilde{A}(x)+ \frac{\delta}{2^n}  v,\quad x\in\p_1,~n\in\Bbb{N}.
\end{equation}
Let $u\in\V$ be an arbitrary element. There exists a natural number $n\in\Bbb{N}$ such that
 \[\frac{\delta}{2^n}v\leqslant u,\quad  0\leqslant \frac{\delta}{2^n}v+u,\quad \frac{f(2^nx)}{2^n}\leqslant A(x)+u,\quad A(x)\leqslant \frac{f(2^nx)}{2^n}+u,\quad x\in\p_1.\]
Thus from \eqref{p.Px.14},  we derive
\[\widetilde{A}(x)\leqslant A(x)+2u,\quad A(x)\leqslant\widetilde{A}(x) +2u,\quad x\in\p_1. \]
Using Lemma \ref{Lem.ab}, we conclude that $\widetilde{A}(x)=A(x)$ for all $x\in\p_1$. This proves the uniqueness of $A$.
\end{proof}

By employing Theorem \ref{Px}, we can derive the following corollaries.
\begin{cor}{\rm(Modified version of \cite[Theorem 1]{NR2})}
Consider $ (\p _1, \V_1) $ as a locally convex cone and $ (\p _2, \V_2) $ as a separated, full locally convex cone and complete with respect to the symmetric topology. Assume that  the mapping $ f: \p _1 \to \p _2 $ satisfies  the following condition
$$
2f\left(\frac{x+y}{2}\right) \in v(f(x) + f(y))v
$$
for some bounded element $ v \in \V_2 $ and for all $x, y \in \p _1$, and   $ f(0) $ is bounded. Then there exists a unique additive mapping $A:\p _1 \to \p _2$ and a positive real number $ \rho $ such that
\begin{align*}
 A(x)&\in (\rho v)(f(x))(\rho v)\;\; \text{for all}\;\;x \in \p _1.
\end{align*}
\end{cor}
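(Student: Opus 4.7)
The plan is to reduce the Jensen-type hypothesis to the Pexiderized Cauchy form $f(x+y)\in v'(g(x)+h(y))v'$ and invoke Theorem \ref{Px}. First I would substitute $(x,y)\to(2x,2y)$ in the hypothesis, which collapses $\tfrac{x+y}{2}$ into $x+y$ and yields $2f(x+y)\in v(f(2x)+f(2y))v$ for all $x,y\in\p_1$.

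Next I would halve this two-sided relation. Using the cone axioms $\lambda(a+b)=\lambda a+\lambda b$ and $a\leqslant b\Rightarrow\lambda a\leqslant\lambda b$ for $\lambda\geqslant 0$, the membership above, once unpacked into its two defining inequalities and multiplied by $\tfrac{1}{2}$, yields $f(x+y)\in v'(g(x)+h(y))v'$, where $v':=\tfrac{1}{2}v$ and $g(z):=h(z):=\tfrac{1}{2}f(2z)$. Since $v'$ is a positive scalar multiple of the bounded element $v$, it is itself bounded, and $f(0)$ is bounded by assumption, so all hypotheses of Theorem \ref{Px} are satisfied for the triple $(f,g,h)$ with respect to $v'$.

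Applying Theorem \ref{Px} then supplies a unique additive mapping $A:\p_1\to\p_2$ together with a $\delta>0$ such that $A(x)\in(\delta v')(f(x))(\delta v')$; setting $\rho:=\delta/2$ rewrites this as $A(x)\in(\rho v)(f(x))(\rho v)$, which is the claim. For uniqueness, any additive $\widetilde A$ with $\widetilde A(x)\in(\rho' v)(f(x))(\rho' v)$ for some $\rho'>0$ equivalently satisfies $\widetilde A(x)\in(2\rho' v')(f(x))(2\rho' v')$, so the uniqueness clause in Theorem \ref{Px} forces $\widetilde A=A$. I do not anticipate a substantial obstacle here; the only delicate point is verifying that dividing a symmetric membership $a\in w(b)w$ by a positive scalar yields another valid such membership, which is immediate from the preorder-compatibility and distributivity axioms recalled in the preliminaries.
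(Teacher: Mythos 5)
Your proposal is correct and follows essentially the same route as the paper, whose entire proof is the one-line reduction $g(x)=h(x):=\frac{1}{2}f(2x)$ in Theorem \ref{Px}; you merely spell out the substitution $(x,y)\mapsto(2x,2y)$, the halving of the two defining inequalities with $v':=\frac12 v$, and the transfer of the uniqueness clause, all of which are valid.
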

\begin{proof} It is sufficient to take $g(x)=h(x):=\frac{1}{2}f(2x)$ for all $ x \in \p _1 $ in Theorem \ref{Px}.
\end{proof}
\begin{cor}{\rm(Modified version of \cite[Theorem 1]{NR1})}
Consider $ (\p _1, \V_1) $ as a locally convex cone and $ (\p _2, \V_2) $ as a separated, full locally convex cone and complete with respect to the symmetric topology. Assume that  the mapping $ f : \p _1 \to \p _2 $ satisfies the condition
$$
f(\alpha x+y) \in v(\alpha f(x) + f(y))v,\;\;x, y \in \p _1
$$
for some bounded element $ v \in \V_2 $ and for all $\alpha>0$. Then there exists a unique linear operator $L:\p _1 \to \p _2$ and a positive real number $ \gamma $ such that
\begin{align*}
 L(x)&\in (\gamma v)(f(x))(\gamma v)\;\; \text{for all}\;\;x \in \p _1.
\end{align*}
\end{cor}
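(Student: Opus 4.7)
The plan is to reduce the corollary to Theorem \ref{Px} by specialising the free parameter $\alpha$, and then to upgrade the additive mapping produced there to a linear operator by exploiting the $\alpha$-dependence a second time. First I would set $\alpha=1$ in the hypothesis to obtain
\[
f(x+y) \in v(f(x)+f(y))v, \quad x,y \in \p_1,
\]
which is exactly the Pexiderized condition of Theorem \ref{Px} with $g=h=f$. Provided $f(0)$ is bounded --- which one should extract from the hypothesis by setting $x=y=0$ to obtain $(\alpha+1)f(0)\leqslant v+f(0)$ for every $\alpha>0$ and combining this family of inequalities with the fullness of $(\p_2,\V_2)$ --- Theorem \ref{Px} yields a unique additive mapping $A:\p_1\to\p_2$ and a constant $\delta>0$ with
\[
A(x) \in (\delta v)(f(x))(\delta v), \quad x \in \p_1.
\]
This already supplies the candidate operator $L:=A$ and the scalar $\gamma:=\delta$.

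The next step is to prove positive homogeneity $A(\alpha x)=\alpha A(x)$ for all $\alpha>0$ and $x\in\p_1$, which, combined with additivity and the fact that $A(0)=0$, will give linearity of $A$. To this end I would feed $y=0$ into the hypothesis to get $f(\alpha x)\in v(\alpha f(x)+f(0))v$, then replace $x$ by $2^n x$ and divide by $2^n$ to obtain
\[
\frac{f(2^n(\alpha x))}{2^n} \leqslant \frac{v}{2^n}+\alpha\,\frac{f(2^n x)}{2^n}+\frac{f(0)}{2^n}
\]
together with the companion inequality obtained by swapping the two sides. From the construction of $A$ inside the proof of Theorem \ref{Px} one knows that $\frac{f(2^n x)}{2^n}\to A(x)$ and $\frac{f(2^n(\alpha x))}{2^n}\to A(\alpha x)$ in the symmetric topology, while Lemma \ref{Lem.con} ensures $\frac{v}{2^n}\to 0$ and $\frac{f(0)}{2^n}\to 0$. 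Passing to the limit in both inequalities and applying Lemma \ref{Lem.ab} delivers $A(\alpha x)=\alpha A(x)$, as required.

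Uniqueness then comes for free: any linear operator $L'$ satisfying $L'(x)\in(\gamma v)(f(x))(\gamma v)$ is in particular an additive mapping with the same estimate, so the uniqueness clause of Theorem \ref{Px} forces $L'=A$. The main obstacle I anticipate is the preliminary verification that $f(0)$ is bounded in the absence of antisymmetry of the preorder: the original \cite[Theorem 1]{NR1} used antisymmetry to deduce $f(0)=0$ outright, whereas here one must instead squeeze $f(0)$ using the whole family of inequalities indexed by $\alpha>0$ together with the lower boundedness inherent to a full locally convex cone. Once this boundedness is in hand, the remainder is a routine combination of Theorem \ref{Px} with a limit in the symmetric topology, using Lemmas \ref{Lem.ab} and \ref{Lem.con}.
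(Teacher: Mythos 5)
Your overall strategy is sound and close in spirit to the paper's: the paper simply substitutes $g(x):=\alpha f(x/\alpha)$ and $h:=f$ into Theorem \ref{Px} (positive homogeneity of the resulting additive map then follows from the uniqueness clause, since $x\mapsto\alpha A(x/\alpha)$ is again additive and lies at bounded symmetric distance from $f$), whereas you take $\alpha=1$ to obtain additivity and then prove $A(\alpha x)=\alpha A(x)$ by passing to the limit in $2^{-n}f(2^n\alpha x)\leqslant \alpha\, 2^{-n}f(2^n x)+2^{-n}f(0)+2^{-n}v$ and its companion, finishing with Lemmas \ref{Lem.con} and \ref{Lem.ab}. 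That part of your argument is correct (with the minor caveat that the uniqueness proof of Theorem \ref{Px} must be rerun with your $\gamma$ in place of $\delta$, which it tolerates without change).

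The genuine gap is the step where you claim to extract boundedness of $f(0)$ from the hypothesis. Setting $x=y=0$ gives only $(\alpha+1)f(0)\leqslant f(0)+v$ and $f(0)\leqslant(\alpha+1)f(0)+v$ for all $\alpha>0$; since cancellation is not available in a cone, these do not force $f(0)$ to be upper bounded, and fullness only supplies $0\leqslant f(0)+\rho v$, which pushes in the wrong direction. Concretely, take $\p_2=\Bbb{\overline{R}}$ with $\V_2=\xi$ and $f\equiv+\infty$: the hypothesis $f(\alpha x+y)\in v(\alpha f(x)+f(y))v$ holds for every $\alpha>0$, every inequality in your proposed family is satisfied, yet $f(0)=+\infty$ is not upper bounded and no linear operator $L$ (which must satisfy $L(0)=0$) can satisfy $L(x)\in(\gamma v)(f(x))(\gamma v)$, because $+\infty\leqslant L(x)+\gamma v$ forces $L\equiv+\infty$. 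So boundedness of $f(0)$ cannot be derived; it has to be imposed as a hypothesis, exactly as in Theorem \ref{Px} and in the Jensen corollary. To be fair, the paper's own one-line proof has the same defect, since it invokes Theorem \ref{Px} (whose first step uses $f(0)\leqslant\lambda w$) without the corollary assuming $f(0)$ bounded. Your instinct that this is ``the main obstacle'' is right, but the squeezing argument you sketch does not close it.
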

\begin{proof} It is sufficient to take $g( x):=\alpha f\left(\frac{x}{\alpha}\right)$ and $h(x):=f(x)$ for all $ x \in \p _1 $ and $\alpha>0$ in Theorem \ref{Px}.
\end{proof}
\begin{cor} Let $ (\p _1, \V_1) $ be a locally convex cone. If  the mappings $ f,g,h : \p _1 \to  (\Bbb{\overline{R}}, \xi)$ $($or $f,g,h : \p _1 \to (\Bbb{\overline{R}}_+, \xi))$   satisfy the following condition
$$f(x+y) \in \varepsilon(g(x) + h(y))\varepsilon
$$for some $\varepsilon>0$ and for all $x, y \in \p _1$, and   $ f(0)<\infty$, then there exists a unique additive mapping $A : \p _1 \to  (\Bbb{\overline{R}}, \xi)$ $($or $A: \p _1 \to (\Bbb{\overline{R}}_+, \xi))$  and a positive real number $\eta $ such that
\begin{align*}
 A(x)&\in \eta(f(x))\eta, \\
 A(x)&\in (\varepsilon+\eta)(g(x)+h(0))(\varepsilon+\eta),\\
 A(x)&\in (\varepsilon+\eta)(h(x)+g(0))(\varepsilon+\eta)
\end{align*}
for all $ x \in \p _1 $.
\end{cor}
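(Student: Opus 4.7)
The plan is to obtain this corollary as a direct application of Theorem \ref{Px} with the target cone taken to be $(\Bbb{\overline{R}}, \xi)$ (or $(\Bbb{\overline{R}}_+, \xi)$) and with the abstract neighborhood element $v$ chosen to be the scalar $\varepsilon$. Accordingly, the main task is to verify that this target scalar cone actually satisfies every structural hypothesis that Theorem \ref{Px} imposes on $(\p_2, \V_2)$; once that is in place, the three inclusions are immediate specializations.

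First I would recall, from the preliminary section, that $(\Bbb{\overline{R}}, \xi)$ and $(\Bbb{\overline{R}}_+, \xi)$ are explicitly listed as full locally convex cones. For separation, I would observe that the symmetric topology induced by $\xi$ agrees with the usual Hausdorff topology on $\R$ extended by a separate point at $+\infty$, so by \cite[Proposition I.3.9]{KR} the cone is separated. Completeness in the symmetric topology follows from the completeness of $\R$: a symmetric Cauchy net is either eventually trapped in a bounded subset of $\R$ and converges there, or it escapes to $+\infty$ and thereby converges to $+\infty$ in the symmetric sense.

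Second, I would check the boundedness conditions. The element $v = \varepsilon$ is bounded in $(\Bbb{\overline{R}}, \xi)$, since for every $\varepsilon' \in \xi$ we have $\varepsilon \leqslant \lambda \varepsilon'$ for $\lambda$ sufficiently large, and $0 \leqslant \varepsilon + \rho \varepsilon'$ for any $\rho > 0$. The hypothesis $f(0) < \infty$ likewise ensures that $f(0)$ is bounded, by the same reasoning. Thus all hypotheses of Theorem \ref{Px} are fulfilled for the maps $f, g, h$ at hand.

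Applying Theorem \ref{Px} then yields a unique additive mapping $A : \p_1 \to (\Bbb{\overline{R}}, \xi)$ and a positive constant $\delta$ satisfying $A(x) \in (\delta \varepsilon)(f(x))(\delta \varepsilon)$ together with the analogous inclusions for $g(x) + h(0)$ and $h(x) + g(0)$ with constant $(1+\delta)\varepsilon$. Setting $\eta := \delta \varepsilon$ in the first inclusion and $\eta := \varepsilon \delta$ (so that $\varepsilon + \eta$ plays the role of $(1+\delta)\varepsilon$) in the other two translates the abstract neighborhood notation into the scalar neighborhood notation of the corollary, giving precisely the three stated inclusions. I do not anticipate a genuine obstacle; the proof is essentially a specialization, and the only point requiring mild care is the bookkeeping between the scalar $\varepsilon$ of the hypothesis and the abstract $v$ of Theorem \ref{Px}, together with the routine verification that the scalar target cone meets the separation and completeness requirements of that theorem.
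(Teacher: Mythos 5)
Your proposal is correct and follows exactly the route the paper takes: it verifies that $(\Bbb{\overline{R}}, \xi)$ (resp.\ $(\Bbb{\overline{R}}_+, \xi)$) is a separated, full locally convex cone, complete in the symmetric topology, and then specializes Theorem \ref{Px} with $v=\varepsilon$. Your extra remarks on the boundedness of $f(0)$ and the identification $\eta=\delta\varepsilon$ are just the details the paper leaves implicit.
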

\begin{proof}
Consider the locally convex cone $(\Bbb{\overline{R}}, \xi)$ where $\xi= \{ \varepsilon> 0 : \varepsilon\in\mathbb R\}.$
This locally convex cone is a full separated locally convex cone. It is complete under
the symmetric topology.  So, the
desired result now follows from Theorem \ref{Px}.
\end{proof}
As a straightforward deduction from Theorem \ref{Px}, we obtain the following.
\begin{cor}
Consider a locally convex cone  $ (\p _1, \V_1) $ and a separated full uc-cone $ (\p _2, \V_2) $  equipped with a generating element $v$.  Assume $ (\p _2, \V_2) $ is complete under its symmetric topology.
If  the mappings $ f,g,h : \p _1 \to \p_2$   satisfy the following condition
$$f(x+y) \in (\lambda v)(g(x) + h(y))(\lambda v),\;\; \;\;x, y \in \p _1
$$
for some $\lambda>0$,  where  $ f(0)$ is bounded, then there exists a unique additive mapping $A : \p _1 \to \p_2$  and a constant $\mu >0$ such that
\begin{align*}
 A(x)&\in (\mu v)(f(x))(\mu v), \\
 A(x)&\in ((\lambda+\mu)v)(g(x)+h(0))((\lambda+\mu)v),\\
 A(x)&\in ((\lambda+\mu)v)(h(x)+g(0))((\lambda+\mu)v)
\end{align*}
for all $ x \in \p _1 $.
\end{cor}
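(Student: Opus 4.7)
The plan is to deduce this corollary directly from Theorem \ref{Px} by recognizing that in a uc-cone with generating element $v$, every scalar multiple $\lambda v$ is an element of $\V_2$, and all elements of $\V_2$ are bounded by definition of a uc-cone. So the hypothesis $f(x+y)\in(\lambda v)(g(x)+h(y))(\lambda v)$ is exactly the hypothesis of Theorem \ref{Px} with the role of the bounded neighborhood element played by $v':=\lambda v\in\V_2$.

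First I would verify that all the structural hypotheses of Theorem \ref{Px} are in force: $(\p_1,\V_1)$ is a locally convex cone, $(\p_2,\V_2)$ is a separated full locally convex cone that is complete under the symmetric topology (these are given), and $f(0)$ is bounded (given). The only nontrivial point is that $v'=\lambda v$ must be a bounded element of $\V_2$, and this is immediate from the definition of a uc-cone, where $\V_2=\{\alpha v:\alpha>0\}$ consists entirely of bounded elements.

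Next I would invoke Theorem \ref{Px} with the bounded neighborhood element $v'=\lambda v$ to obtain a unique additive mapping $A:\p_1\to\p_2$ and a positive constant $\delta>0$ such that
\begin{align*}
A(x)&\in(\delta v')(f(x))(\delta v'),\\
A(x)&\in(1+\delta)v'(g(x)+h(0))(1+\delta)v',\\
A(x)&\in(1+\delta)v'(h(x)+g(0))(1+\delta)v'
\end{align*}
for all $x\in\p_1$. Substituting $v'=\lambda v$, these neighborhoods become $(\delta\lambda v)(f(x))(\delta\lambda v)$ and $((1+\delta)\lambda v)(g(x)+h(0))((1+\delta)\lambda v)$ and similarly for the third relation.

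Finally I would set $\mu:=\delta\lambda>0$, so that $(1+\delta)\lambda v=(\lambda+\mu)v$, which reshapes the three conclusions into the exact form stated in the corollary. Uniqueness of $A$ transfers from uniqueness in Theorem \ref{Px} since any additive mapping $\widetilde{A}$ satisfying $\widetilde{A}(x)\in(\mu v)(f(x))(\mu v)$ automatically satisfies $\widetilde{A}(x)\in(\delta v')(f(x))(\delta v')$. There is no substantive obstacle here; the corollary is purely a matter of recognizing that the uc-cone hypothesis supplies a bounded element $v'=\lambda v$ for free, and of rewriting the constants.
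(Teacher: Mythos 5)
Your proposal is correct and is exactly the intended argument: the paper offers no written proof for this corollary beyond calling it ``a straightforward deduction from Theorem \ref{Px},'' and your deduction --- noting that $\lambda v\in\V_2$ is bounded because every element of a uc-cone's neighborhood system is bounded, applying Theorem \ref{Px} with $v'=\lambda v$, and setting $\mu=\delta\lambda$ so that $(1+\delta)\lambda=\lambda+\mu$ --- fills it in faithfully, including the transfer of uniqueness.
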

Finally, we establish the Hyers-Ulam stability for the Pexiderized Cauchy functional equation in locally convex cones using traditional methods.
\begin{thm}
Consider a locally convex cone $ (\p _1, \V_1) $  and let $ (\p _2, \V_2) $ be a separated, full uc-cone that simultaneously forms a real vector space. Suppose $\p_2$ is complete with respect to its symmetric topology. If for some $\varepsilon  > 0$,  the mappings $ f,g,h : \p _1 \to \p_2$  satisfy
\begin{equation}\label{fgh}
f(x+y) \in (\varepsilon v)(g(x) + h(y))(\varepsilon v)
\end{equation}
for all $x, y \in \p _1$,  where $v$  is the generating element of $\V_2 $,   then there exists a unique additive mapping $A : \p _1 \to \p_2$  such that
\begin{align*}
  A(x)&\in ((4r\varepsilon+\beta) v)(f(x))((4r\varepsilon+\beta) v), \\
  A(x)&\in ((5r\varepsilon+\gamma) v)(g(x))((4r\varepsilon+\gamma) v),\\
  A(x)&\in ((5r\varepsilon+\delta) v)(h(x))((5r\varepsilon+\delta) v)
\end{align*}
for all $ x \in \p _1 ,~r>1$,  and some positive constants $\beta, \gamma, \delta$.
\end{thm}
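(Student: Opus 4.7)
The plan is to exploit the vector-space structure of $\p_2$ to reduce the problem to the classical Hyers stability theorem in a Banach space. Since $(\p_2,\V_2)$ is a separated uc-cone with generating element $v$ and is simultaneously a real vector space, the formula \eqref{norm} defines a genuine norm $q$ on $\p_2$ (separatedness makes the symmetric topology Hausdorff), and the assumed completeness of the symmetric topology turns $(\p_2,q)$ into a Banach space. Under this identification, the cone condition $a\in(\lambda v)(b)(\lambda v)$ is essentially the norm bound $q(a-b)\leqslant\lambda$, with the small caveat that $q(a-b)\leqslant\lambda$ translates to $a\in(\lambda'v)(b)(\lambda'v)$ only for $\lambda'>\lambda$; this is precisely the source of the parameter $r>1$ appearing in the conclusion.

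Next I would run the classical chain of substitutions. Setting $(x,y)=(0,0),(x,0),(0,y)$ in \eqref{fgh} yields
\begin{align*}
q(f(0)-g(0)-h(0))&\leqslant\varepsilon,\\
q(f(x)-g(x)-h(0))&\leqslant\varepsilon,\\
q(f(y)-g(0)-h(y))&\leqslant\varepsilon.
\end{align*}
Combining these with $q(f(x+y)-g(x)-h(y))\leqslant\varepsilon$ via the triangle inequality produces
$$q\bigl(f(x+y)-f(x)-f(y)+f(0)\bigr)\leqslant 4\varepsilon,$$
so the translated mapping $F(x):=f(x)-f(0)$ satisfies $F(0)=0$ together with the approximate-additivity bound $q(F(x+y)-F(x)-F(y))\leqslant 4\varepsilon$.

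I would now apply the classical Hyers theorem to $F$ in the Banach space $(\p_2,q)$, obtaining a unique additive mapping $A:\p_1\to\p_2$ defined by $A(x)=\lim_n 2^{-n}F(2^nx)$ and satisfying $q(A(x)-F(x))\leqslant 4\varepsilon$. Translating back to cone notation, for any $r>1$ this gives $A(x)\in((4r\varepsilon+\beta)v)(f(x))((4r\varepsilon+\beta)v)$, where $\beta$ absorbs $q(f(0))$ and the slack needed to convert a bound $\leqslant 4\varepsilon$ into a strict neighborhood membership. The corresponding bounds for $g$ and $h$ then follow by combining the $f$-estimate with the auxiliary inequalities $q(f(x)-g(x)-h(0))\leqslant\varepsilon$ and $q(f(y)-g(0)-h(y))\leqslant\varepsilon$; the constants $\gamma$ and $\delta$ absorb $q(h(0))$ and $q(g(0))$ respectively (plus $r$-slack), and the additional $\varepsilon$ accounts for the jump from $4r\varepsilon$ to $5r\varepsilon$ in those estimates.

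The main obstacle is the careful bookkeeping between norm estimates and neighborhood containments: the strict-versus-non-strict distinction for $q$ forces the factor $r>1$ into every estimate, and the constants $\beta,\gamma,\delta$ must be chosen to simultaneously absorb the norms of $f(0),g(0),h(0)$ and the conversion slack. Uniqueness of $A$ in the cone sense reduces to the uniqueness already built into Hyers' classical theorem, which is transferred back into the cone framework using Lemma \ref{Lem.ab}.
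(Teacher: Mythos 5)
Your proposal is correct and follows essentially the same route as the paper: pass to the Banach space $(\p_2,q)$ via the norm \eqref{norm}, derive the $4\varepsilon$-approximate additivity of $f$ (up to the translate $f(0)$) from the four substitutions, run the Hyers direct method on $\tfrac{1}{2^n}f(2^nx)$, and translate the resulting norm bounds back into symmetric-neighborhood containments, with $r>1$ absorbing the strict/non-strict slack and $\beta,\gamma,\delta$ absorbing the bounded elements $f(0),g(0),h(0)$. The only cosmetic difference is that you invoke Hyers' theorem as a black box on $F(x)=f(x)-f(0)$, whereas the paper reproduces the telescoping estimate explicitly; the constants and conclusions are identical.
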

\begin{proof}
Since the symmetric topology on $\p_2$ is Hausdorff (cf.
\cite{KR}, I.3.9.), the pair $(\p_2, \V_2)$ forms a Banach space when equipped with the norm
$q$ given in \eqref{norm}.  Additionally, because $\p_2$ is a vector space, every element of $\p_2$ is bounded, ensuring that  $q(a)<\infty$ for all $a\in\p_2$. Consequently, from \eqref{fgh}, we derive
$$f(x+y)-g(x)-h(y)\in \varepsilon (v(0)v)$$
for all $ x,y\in \p _1 $. By the definition \eqref{norm}, we have
\begin{align}\label{p1}
q(f(x+y)-g(x)-h(y))\leqslant \varepsilon
\end{align}
for all $ x,y\in \p _1 $. By using Hyers's method \cite{Hyers1941,jun, najr} and letting $y=x$ in \eqref{p1}, we obtain the inequality
\begin{align}\label{p2}
q(f(2x)-g(x)-h(x))\leqslant \varepsilon
\end{align}
for all $ x \in \p _1 $. Next, substituting $y = 0$ in \eqref{p1} yields
\begin{align}\label{p3}
q(f(x)-g(x)-h(0))\leqslant \varepsilon
\end{align}
for all $ x \in \p _1 $. Similarly, taking $x = 0$ in  \eqref{p1}  gives
\begin{align}\label{p4}
q(f(y)-g(0)-h(y))\leqslant \varepsilon
\end{align}
for all $y\in \p _1 $. Finally, setting $y=0$ in \eqref{p4} in \eqref{p4} leads to
\begin{align}\label{p5}
q(f(0)-g(0)-h(0))\leqslant \varepsilon.
\end{align}
From \eqref{p2}, \eqref{p3}, \eqref{p4} and \eqref{p5}, we obtain
\begin{align*}
q(f(2x)-2f(x)+f(0))&\leqslant q(f(2x)-g(x)-h(x))+q(-f(x)+g(x)+h(0))\\
 &\quad+q(-f(x)+h(x)+g(0))+q(f(0)-g(0)-h(0))
\\
&\leqslant 4\varepsilon
\end{align*}
for all $ x \in \p _1 $. So,
\begin{align}\label{p}
q\left(\frac{1}{2^{n+1}}f(2^{n+1}x)-\frac{1}{2^{m}}f(2^{m}x)+\sum_{k=m}^{n}\frac{1}{2^{k+1}}f(0)\right)\leqslant \sum_{k=m}^{n}\frac{2\varepsilon}{2^{k}}
\end{align}
for all $ x \in \p _1 $ and $n\geqslant m\geqslant 0$.
From \eqref{p}, it follows that the sequence $\left\{\frac{1}{2^{n}}f(2^{n}x)\right\}_{n\geqslant 1}$  forms a Cauchy sequence in the Banach space
$(\p_2, q)$.  Due to the completeness of the space, this sequence converges with respect to the norm  $q$.  We can therefore define a mapping
 $A : \p_1\to \p_2$
by taking the limit:
\[A(x)=\lim_{n\to\infty}\frac{1}{2^{n}}f(2^{n}x),\quad x \in \p _1.\]
Taking  $m=0$ and passing to the limit as $n\to\infty$ in \eqref{p}, we obtain
\begin{align}\label{p9}
q\left(A(x)-f(x)+f(0)\right)\leqslant 4\varepsilon,\quad x \in \p _1 .
\end{align}
Consequently, for all $x \in \p _1$ and $r>1$, we have   $A(x)+f(0)-f(x)\in 4r\varepsilon (v(0)v)$. This implies that
\[A(x)\in (4r\varepsilon v)(f(x)-f(0))(4r\varepsilon v),\quad x \in \p _1 ,~ r>1.\]
From   \eqref{p3} and \eqref{p9}, we obtain
\begin{align*}
q\left(A(x)+f(0)-g(x)-h(0)\right)\leqslant& q\left(A(x)+f(0)-f(x)\right)+q\left(f(x)-g(x)-h(0)\right)\leqslant 5\varepsilon
\end{align*}
for all $ x \in \p _1 $. Also, from \eqref{p4} and \eqref{p9}, we get
\begin{align*}
q\left(A(x)+f(0)-h(x)-g(0)\right)\leqslant& q\left(A(x)+f(0)-f(x)\right)+q\left(f(x)-h(x)-g(0)\right)\leqslant 5\varepsilon
\end{align*}
for all $ x \in \p _1 $.
The same way we did before, we conclude that
\[A(x)\in (5r\varepsilon v)(g(x)-f(0)+h(0))(5r\varepsilon v)\]
and
\[A(x)\in (5r\varepsilon v)(h(x)-f(0)+g(0))(5r\varepsilon v)\]
for all $ x \in \p _1 $ and all $r>1$.
Since $f(0)$, $g(0)$ and $h(0)$ are bounded,  there exist positive real numbers $\beta, \gamma$ and $\delta$ such that we  deduce
\begin{align*}
  A(x)&\in ((4r\varepsilon+\beta) v)(f(x))((4r\varepsilon+\beta) v), \\
  A(x)&\in ((5r\varepsilon+\gamma) v)(g(x))((4r\varepsilon+\gamma) v),\\
  A(x)&\in ((5r\varepsilon+\delta) v)(h(x))((5r\varepsilon+\delta) v)
\end{align*}
for all $ x \in \p _1 $ and all $r>1$. This completes the proof.
\end{proof}
\section*{Declarations}
\noindent \textbf{Conflict of Interest and Funding Disclosure }
The authors declare no conflicts of interest regarding this publication. All authors have reviewed and approved the authorship order as presented in the manuscript. This research received no substantial financial support that could have influenced its results or conclusions.

\end{document}